\newcommand{\Rmnum}[1]{\expandafter\@slowromancap\romannumeral#1@}
\numberwithin{equation}{section}
\newtheorem{theorem}{Theorem}[section]
\newtheorem{lemma}{Lemma}[section]
\begin{document}
\title{\Large Note on a problem of Nathanson related to the $\varphi$-Sidon set}
\date{}
\author{\large Csaba S\'andor,$^{1}$\footnote{Email: csandor@math.bme.hu. This author was supported by the OTKA Grant No. K129335.}~
Quan-Hui Yang$^{2}$\footnote{Email:~yangquanhui01@163.com.}~and~Jun-Yu Zhou$^{2}$}
\date{} \maketitle
 \vskip -3cm
\begin{center}

\vskip -1cm { \small 1. Institute of Mathematics, Budapest
University of Technology and Economics, and MTA-BME Lend\"{u}let Arithmetic Combinatorics Research Group, ELKH, H-1529 B.O. Box, Hungary}
 \end{center}

 \begin{center}
{ \small 2. School of Mathematics and Statistics, Nanjing University of Information \\
Science and Technology, Nanjing 210044, China}
 \end{center}

\begin{abstract} Let $\varphi (x_{1}, \ldots, x_{h})=c_{1} x_{1}+\cdots+c_{h} x_{h}$ be a linear form with coefficients in a field $\mathbf{F}$, and let $V$ be a vector space over $\mathbf{F}$. A nonempty subset $A$ of $V$ is a $\varphi$-Sidon set if $\varphi\left(a_{1}, \ldots, a_{h}\right)=\varphi\left(a_{1}^{\prime}, \ldots, a_{h}^{\prime}\right)$ implies $\left(a_{1}, \ldots, a_{h}\right)=$ $\left(a_{1}^{\prime}, \ldots, a_{h}^{\prime}\right)$ for all $h$-tuples $\left(a_{1}, \ldots, a_{h}\right) \in A^{h}$ and $\left(a_{1}^{\prime}, \ldots, a_{h}^{\prime}\right) \in A^{h}$.
We call $A$ a  polynomial perturbation of $B$ if for some $r>0$ and positive integer $k_0$,
$|a_k-b_k|< k^r$ holds for all integers $k \geq k_0$. In this paper, for a given set $B$, we prove
that there exists a $\varphi$-Sidon set $A$ of integers that is a polynomial perturbation of $B$.
This gives an affirmative answer to a recent problem of Nathanson. Some other results are also
proved.

{\it Keywords:} Nathanson's theorem, $\varphi$-Sidon set, representation function, sumset.
\end{abstract}

\section{Introduction}
For a field $\mathbf{F}$ and a positive integer $h$, we define linear forms
\begin{eqnarray}\label{eq11}
\varphi(x_1,\ldots,x_h)=c_1 x_1+\cdots+c_h x_h,
\end{eqnarray}
where $c_i\in \mathbf{F}$ for all $i\in\{1,\ldots,h\}$. Let $V$ be a vector space over the field $\mathbf{F}$. For every nonempty set $A\subseteq V$, let
$$
A^h=\left\{ (a_1,a_2,\ldots,a_h):a_i\in A~\text{for~all}~i\in \{1,2,\ldots,h\} \right\}
$$
be the set of all $h$-tuples of elements of $A$. For $c\in \mathbf{F}$, the $c$-$dilate$ of $A$ is defined as
$$
c\ast A=\{ca: a\in A \}.
$$
The $\varphi$-$image$ of $A$ is the set
\begin{eqnarray*}
 \varphi(A)&=&\left\{ \varphi(a_1,a_2,\ldots,a_h):(a_1,a_2,\ldots,a_h)\in A^h \right\}\\
 &=&\{c_1a_1+\cdots+c_ha_h:~(a_1,\ldots,a_h)\in A^h\}\\
 &=& c_1 \ast A+ \cdots +c_h \ast A .
\end{eqnarray*}

We call a nonempty subset $A$ of $V$ a $Sidon$ set for the linear form $\varphi$ (or a $\varphi$-$Sidon$ set)
if $ \varphi(a_1,a_2,\ldots,a_h)$ with $(a_1,a_2,\ldots,a_h)\in A^h$ are all distinct. That is, for all $h$-tuples $(a_1,a_2,\ldots,a_h)\in A^h$ and $(a'_1,a'_2,\ldots,a'_h)\in A^h$, if
$\varphi (a_1,a_2,\ldots,a_h) =\varphi(a'_1,a'_2,\ldots,a'_h),$
then $(a_1,a_2,\ldots,a_h)= (a'_1,a'_2,\ldots,a'_h)$.

For every nonempty subset $I$ of $\{1,\ldots,h\}$, define the subset sum
\begin{eqnarray}\label{eq12}
s_I =\sum_{i\in I}c_i .
\end{eqnarray}
Let $s_\emptyset =0$. Suppose there exist disjoint subsets $I_1$ and $I_2$ of $\{1,\ldots,h\}$ with $I_1$ and $I_2$ not both empty such that
\begin{eqnarray}\label{eq13}
s_{I_{1}} =\sum_{i\in I_1}c_i =\sum_{i\in I_2}c_i=s_{I_{2}}.
\end{eqnarray}
Then $A$ is not a sidon set (See \cite{Nathanson1}).
We say that the linear form (\ref{eq11}) has property $N$ if there do not exist disjoint nonempty
subsets $I_1$ and $I_2$ of $\{1,\ldots,h\}$ that satisfy (\ref{eq13}).

Let $J\subseteq \{1, \ldots, h\}$, we define the linear form in $\operatorname{card}(J)$ variables
$$
\varphi_{J}=\sum_{j \in J} c_{j} x_{j}.
$$
By definition, $\varphi_{\emptyset}=0$ and $\varphi_{J}=\varphi$ if $J=\{1, \ldots, h\} .$ The linear form $\varphi_{J}$ is called a contraction of the linear form $\varphi$.
For every nonempty subset $A$ of $V$, let
$$
\varphi_{J}(A)=\left\{\sum_{j \in J} c_{j} a_{j}: a_{j} \in A \text { for all } j \in J\right\}.
$$
If $A$ is a $\varphi$-Sidon set, then $A$ is a $\varphi_{J}$-Sidon set for every nonempty subset $J$ of $\{1, \ldots, h\}$.

For every subset $X$ of $V$ and vector $v \in V$, the {\em translate} of $X$ by $v$ is the set
$$
X+v=\{x+v: x \in X\}.
$$
For every subset of $J$ of $\{1, \ldots, h\}$, let $J^{c}=\{1, \ldots, h\} \backslash J$ be the complement of $J$ in $\{1, \ldots, h\}$. For every subset $A$ of $V$ and $b \in V \backslash A$, we define
$$
\Phi_{J}(A, b)=\varphi_{J}(A)+\bigg(\sum_{j \in J^{c}} c_{j}\bigg) b=\varphi_{J}(A)+s\left(J^{c}\right) b
$$
be the translate of the set $\varphi_{J}(A)$ by the subset sum $s\left(J^{c}\right) b$. We have $\Phi_{\emptyset}(A, b)=\left(\sum_{j=1}^{h} c_{j}\right) b$ and $\Phi_{J}(A, b)=\varphi(A)$ if $J=\{1, \ldots, h\}$.

Let $A=\{a_k:k=1,2,3,\ldots \}$ and $B=\{b_k:k=1,2,3,\ldots \}$ be sets of integers. We define $A$ a {\em polynomial perturbation} of $B$ if for some $r>0$ and positive integer $k_0$,
$$ |a_k-b_k|< k^r$$
for all integers $k \geq k_0$. The set $A$ is a {\em bounded perturbation} of $B$ if
there exists an $m_0 > 0$ such that
$$ |a_k-b_k|< m_0$$
holds for all integers $k \geq k_0$.

Recently, Nathanson \cite{Nathanson1} posed the following problem.

\noindent{\bf Nathanson's Problem} {\em  Let $\varphi$ be a linear form with integer coefficients that satisfies condition $N$. Let $B$ be a set of integers. Does there exist a $\varphi$-Sidon set of integers that is a polynomial perturbation of $B$? Does there exist a $\varphi$-Sidon set of integers that is a bounded perturbation of $B$?}

For other related results about Sidon set,
one can refer to [1]-[4],[6],[7]. In this paper, we give an affirmative answer to the previous problem and give
some partial results to the last problem. 

\begin{theorem}\label{thm1}
Let $\varphi=\Sigma_{i=1}^h c_i x_i$ be a linear form with integer coefficients that satisfy condition $N$ and $B=\{b_k:k=1,2,3,\ldots \}$ be a set of integers. Then there exists an infinite $\varphi$-Sidon set $A=\{a_k:k=1,2,3,\ldots \}$ of integers such that
$|a_k-b_k|< k^{4h}$ holds for all positive integers $k$.
\end{theorem}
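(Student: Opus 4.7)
The plan is to construct $A=\{a_k\}_{k\geq 1}$ greedily by induction on $k$, maintaining the invariant that $A_k := \{a_1,\dots,a_k\}$ is $\varphi$-Sidon and $|a_i - b_i| < i^{4h}$ for all $i\leq k$. The base case $k=1$ is forced: $|a_1-b_1|<1$ gives $a_1=b_1$, and a singleton is trivially $\varphi$-Sidon. For the inductive step, I look for an integer $a_k$ in the window $W_k = \{n\in\mathbb Z : |n-b_k| < k^{4h}\}$ (which contains $2k^{4h}-1$ integers), distinct from $a_1,\dots,a_{k-1}$, such that $A_k = A_{k-1}\cup\{a_k\}$ remains $\varphi$-Sidon. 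It suffices to bound the set of \emph{forbidden} values of $a_k$ and compare with $|W_k|$.

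Any hypothetical collision in $A_k$ reads $\varphi(\mathbf{a}')=\varphi(\mathbf{a}'')$ with $\mathbf{a}'\neq\mathbf{a}''$ in $A_k^h$, and since $A_{k-1}$ is already $\varphi$-Sidon, $a_k$ must occur in at least one coordinate. Let $J'=\{j:a'_j=a_k\}$ and $J''=\{j:a''_j=a_k\}$. Collecting the $a_k$'s on one side,
$$(s(J')-s(J''))\,a_k \;=\; \sum_{j\notin J''} c_j a''_j \;-\; \sum_{j\notin J'} c_j a'_j \;=:\; T,$$
so $T$ is a fixed $\mathbb Z$-linear combination of elements of $A_{k-1}$. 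If $s(J')\neq s(J'')$, then $a_k$ is uniquely determined by the data $(J',J''; (a'_j)_{j\notin J'},(a''_j)_{j\notin J''})$, so this configuration forbids at most one integer value. If $s(J')=s(J'')$, property $N$ (interpreted in the strong form, equivalent to requiring the $2^h$ subset sums $s(I)$ to be pairwise distinct) forces $J'=J''$; if this common set equals $\{1,\dots,h\}$ the tuples are both $(a_k,\dots,a_k)$, contradicting $\mathbf{a}'\neq\mathbf{a}''$, otherwise $J'^c$ is nonempty and the identity reduces to a $\varphi_{J'^c}$-collision within $A_{k-1}^{|J'^c|}$, ruled out because $A_{k-1}$ is $\varphi$-Sidon and hence $\varphi_{J'^c}$-Sidon, as recorded in the excerpt.

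It remains to count. There are at most $4^h$ patterns $(J',J'')$; for each, the non-$a_k$ entries of $\mathbf{a}',\mathbf{a}''$ range over $A_{k-1}$, giving at most $(k-1)^{h-|J'|}(k-1)^{h-|J''|}\leq(k-1)^{2h}$ configurations, each forbidding at most one integer. Together with the $k-1$ values required for distinctness, the total count of forbidden integers is at most $4^h(k-1)^{2h}+(k-1)\leq k^{4h}+(k-1)$, using $\bigl(4(k-1)^2\bigr)^h\leq (k^4)^h$ for $k\geq 2$. This is strictly less than $|W_k|=2k^{4h}-1$ for every $k\geq 2$, so a valid integer $a_k\in W_k$ exists and the induction closes. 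The main obstacle is the case $s(J')=s(J'')$: without the strong form of property $N$, which rules out nonempty $I$ with $s(I)=0$, an infinite $\varphi$-Sidon set need not exist at all (for example, $\varphi=2x_1+3x_2-5x_3$ admits no Sidon set of size $\geq 2$, since the constant tuples $(x,x,x)$ all map to $0$), so this delicate point underpins the whole construction; once it is handled, the counting is comfortable and the exponent $4h$ leaves plenty of slack.
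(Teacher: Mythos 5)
Your proof is correct and follows essentially the same greedy construction and counting argument as the paper: at each step the collision equation $(s(J')-s(J''))\,a_k = T$ has nonzero coefficient by property $N$, so each configuration forbids at most one value, and $4^h(k-1)^{2h}+(k-1) < 2k^{4h}-1$ leaves room in the window $|n-b_k|<k^{4h}$. Your explicit insistence that property $N$ be read in the strong form (no nonempty $I$ with $s(I)=0$, equivalently all $2^h$ subset sums distinct), illustrated by $\varphi=2x_1+3x_2-5x_3$, is a worthwhile observation: the paper's stated definition of condition $N$ only excludes disjoint \emph{nonempty} $I_1,I_2$, yet its proof of Theorem 1 silently needs the strong form in the case where one of $J_1\setminus(J_1\cap J_2)$ and $J_2\setminus(J_1\cap J_2)$ is empty.
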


\begin{theorem}\label{thm2}
Let $\varphi=\Sigma_{i=1}^h c_i x_i$ be a linear form with integer coefficients that satisfies condition $N$, $C=\sum^{h}_{i=1}|c_i|$, and $B=\{b_1<b_2<\cdots\}$ be
a set of integers. For any $\epsilon>0$, if $|b_t - b_s|\leq \left(  t-s+1  \right)^{h-\epsilon}$ for any positive integers $t>s$, then there does not exist a $\varphi$-Sidon set of integers that is a polynomial perturbation of $B$.
\end{theorem}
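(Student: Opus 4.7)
The strategy is a pigeonhole volume argument on the first $N$ elements of $A$: the $\varphi$-Sidon property forces $\varphi$ to take $N^h$ distinct values, while the density of $B$ together with the perturbation bound confines those values to a much shorter interval.

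Assume for contradiction that $A=\{a_1,a_2,\ldots\}$ is a $\varphi$-Sidon set of integers satisfying $|a_k-b_k|<k^r$ for all $k\geq k_0$ and some fixed $r>0$. For a parameter $N\to\infty$, let $A_N=\{a_k:k_0\leq k\leq N\}$, a set of $N-k_0+1$ distinct integers. First I would bound the diameter of $A_N$: the hypothesis $|b_t-b_s|\leq(t-s+1)^{h-\epsilon}$ applied with $s=k_0$ yields $b_k-b_{k_0}\leq N^{h-\epsilon}$ for every $k\in[k_0,N]$, so all the $b_k$ in this range lie in an interval of length $N^{h-\epsilon}$; combining with $|a_k-b_k|<k^r\leq N^r$ gives
\[
\operatorname{diam}(A_N)\leq N^{h-\epsilon}+2N^{r}.
\]

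Next I would invoke the Sidon property: since $A_N\subseteq A$, the linear form $\varphi$ is injective on $A_N^h$, so $|\varphi(A_N^h)|=(N-k_0+1)^h$. Every value $\sum_{j=1}^{h}c_j a_{i_j}$ is an integer lying in an interval of length at most $C\cdot\operatorname{diam}(A_N)$, whence
\[
(N-k_0+1)^h\leq C\bigl(N^{h-\epsilon}+2N^{r}\bigr)+1.
\]
Letting $N\to\infty$, the left side is $\Theta(N^h)$ while the right side is $O(N^{\max(h-\epsilon,\,r)})$; since $h-\epsilon<h$, the inequality fails for large $N$ as soon as $r<h$, giving the desired contradiction in that regime.

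The hard part is the case $r\geq h$, where the preceding bound is consistent with $A$ being $\varphi$-Sidon. To attack this I would restrict to a narrower window of indices $k\in[M,M+L]$ with $L\ll M$, so that the perturbation contributes only $(M+L)^r$ while $B$ contributes just $L^{h-\epsilon}$, and then try to optimise the pair $(M,L)$. This is where most of the effort would go, and I expect it to require either an averaging over several such windows or a more refined use of the decomposition $\varphi(a_{i_1},\ldots,a_{i_h})=\varphi(b_{i_1},\ldots,b_{i_h})+\sum_j c_j(a_{i_j}-b_{i_j})$, exploiting the fact that the $B$-part of the image is concentrated on an interval of length $O(L^{h-\epsilon})$ and so forces many coincidences that the perturbation cannot separate once $L$ is chosen large enough.
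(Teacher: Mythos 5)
Your first computation is correct, and for $r<h$ it is essentially the paper's own argument: the paper likewise counts the $(t-s+1)^h$ distinct values of $\varphi$ on a window $\{a_s,\ldots,a_t\}$ and compares with the length $C\cdot\mathrm{diam}$ of the integer interval containing them. The genuine problem is the case $r\ge h$ that you flag as ``the hard part'': it cannot be closed, because the statement as printed is false there. Theorem \ref{thm1} of this very paper produces, for \emph{every} $B$ --- in particular for $B=\{1,2,3,\ldots\}$, which for $h\ge 2$ satisfies $|b_t-b_s|=t-s\le (t-s+1)^{h-\epsilon}$ with, say, $\epsilon=1/2$ --- a $\varphi$-Sidon set $A$ with $|a_k-b_k|<k^{4h}$, i.e.\ a polynomial perturbation of $B$. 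So no choice of windows $[M,M+L]$, averaging, or refined decomposition can rescue the claim for large $r$; your (correct) observation that the perturbation term $(M+L)^r$ swamps the $L^{h-\epsilon}$ term in every window is precisely the obstruction, not a failure of ingenuity.

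What the paper actually proves is the bounded-perturbation version: its proof opens by assuming there exist $A$ and an integer $m_0$ with $|a_k-b_k|<m_0$ for all $k$, silently replacing ``polynomial perturbation'' by ``bounded perturbation'' (consistent with the introduction, which presents Theorems \ref{thm2} and \ref{thm3} as partial answers to Nathanson's \emph{bounded}-perturbation question). Under that reading, your first displayed inequality with $N^r$ replaced by the constant $m_0$ gives $(N-k_0+1)^h\le C\bigl(N^{h-\epsilon}+2m_0\bigr)+1$, which fails for large $N$; that is the entire proof and it coincides with the paper's (the paper runs it over a sliding window $[s,t]$ rather than an initial segment, which makes no difference when the perturbation is bounded). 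In short: your argument is complete for the theorem the paper actually proves, and the part you could not finish is a defect of the statement, not of your approach.
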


\begin{theorem}\label{thm3}
Let $\varphi=\sum_{i=1}^h c_i x_i$ be a linear form with integer coefficients that satisfies condition $N$, $C=\sum^{h}_{i=1}|c_i|$, and $B=\{b_1<b_2<\cdots\}$ be
a set of integers. If $b_1>m$ and $b_{k+1}> C b_k+(C+1)m$ for some $m\ge 0$ and
for all positive integers $k$, then there exists a $\varphi$-Sidon set
$A=\{a_1,a_2,\ldots\}$ of integers and a constant $m_0>0$ such that
$ |a_k-b_k|<m_0 $ for all positive integers $k$.
\end{theorem}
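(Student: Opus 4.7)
\noindent\textbf{Proof plan for Theorem \ref{thm3}.} We construct $A=\{a_1,a_2,\ldots\}$ by greedy induction: we fix a single constant $m_0=m_0(h,C,m)$ in advance and at stage $n$ choose $a_n\in(b_n-m_0,b_n+m_0)$ so that $\{a_1,\ldots,a_n\}$ remains $\varphi$-Sidon. Given $a_1,\ldots,a_{n-1}$, an integer $v$ in this window is \emph{forbidden} precisely when setting $a_n=v$ creates distinct $h$-tuples $\alpha,\beta\in\{a_1,\ldots,a_{n-1},v\}^h$ with $\varphi(\alpha)=\varphi(\beta)$. Writing $L=\{i:\alpha_i=v\}$ and $R=\{i:\beta_i=v\}$, the relation rearranges to
\[
(s_L-s_R)\,v=\sum_{i\notin R}c_i\beta_i-\sum_{i\notin L}c_i\alpha_i,
\]
with the right-hand side depending only on $\{a_1,\ldots,a_{n-1}\}$. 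If $L=R$, this reduces to a $\varphi_{L^c}$-Sidon violation in $\{a_1,\ldots,a_{n-1}\}$ (using the fact recorded in the excerpt that a $\varphi$-Sidon set is $\varphi_J$-Sidon for every nonempty $J$), contradicting the inductive hypothesis. Hence $L\ne R$, and property $N$ applied to the disjoint, not-both-empty pair $(L\setminus R,R\setminus L)$ forces $s_L\ne s_R$, so the forbidden $v$ is uniquely determined by the data $(L,R,(\alpha_i)_{i\notin L},(\beta_i)_{i\notin R})$.

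The crux is to bound the number of such data whose associated $v$ falls in the window, uniformly in $n$. Setting $d=s_L-s_R$, the right-hand side has magnitude at most $2C(b_{n-1}+m_0)$ whereas $|dv|\ge |d|(b_n-m_0)\ge |d|\bigl(Cb_{n-1}+(C+1)m-m_0\bigr)$; this comparison forces $|d|\le 2$ once $b_{n-1}$ is large, so only finitely many (at most $4^h$) pairs $(L,R)$ are admissible. For each such pair, the hypothesis $b_{k+1}>Cb_k+(C+1)m$ translates to a lower bound $a_k-a_{k-1}\ge(C-1)b_{k-1}+(C+1)m-2m_0$ on consecutive gaps in $A$, which exceeds the $v$-window width $4m_0$ once $n$ is large enough. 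Any single substitution of an $\alpha_i$ or $\beta_i$ away from its extremal value (either the largest element $a_{n-1}$ or the smallest $a_1$) therefore changes the right-hand side by at least $|c_i|(a_k-a_{k-1})>4m_0$, pushing $v$ out of the window; multiple substitutions compound this cost. Thus each admissible $(L,R)$ admits at most $O(h)$ near-extremal configurations yielding a forbidden $v$, giving a total bound $K=K(h,C)$ on the forbidden count independent of $n$.

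Fixing $m_0$ strictly larger than $K$ then leaves an unforbidden integer in $(b_n-m_0,b_n+m_0)$ at every step past an initial segment; the finitely many early stages, where the geometric gap has not yet dominated, are absorbed by a one-time further enlargement of $m_0$, producing a single universal constant that works for all $n$. The main technical obstacle is the uniform counting in the second paragraph: one must carry out the near-extremal analysis carefully for each admissible $(L,R)$ with $|d|\in\{1,2\}$, verify that every single-position deviation indeed costs more than the window width thanks to the geometric separation of $A$, and confirm that these bounds are uniform in $n$ so that the same $m_0$ governs every induction step.
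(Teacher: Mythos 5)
Your overall strategy --- count the forbidden values in the window $(b_n-m_0,b_n+m_0)$ and choose $m_0$ larger than that count --- is far more elaborate than what the hypothesis demands, and the one step on which it rests is not actually established. The ``near-extremal'' counting in your second paragraph, which you yourself flag as the main obstacle, does not go through as sketched: the claim that ``multiple substitutions compound this cost'' is false, since replacing $\beta_i$ and $\alpha_{i'}$ (with $c_i=c_{i'}$) by the same non-extremal element changes $\sum_{i\notin R}c_i\beta_i-\sum_{i\notin L}c_i\alpha_i$ by zero, so deviations from an extremal configuration can cancel and your argument does not bound the number of forbidden $v$ in the window. Moreover the constant $K$ you would extract depends on $m_0$ (the window width enters the analysis), so ``fix $m_0>K$'' is circular, and ``absorb the early stages by enlarging $m_0$'' cannot work either, because enlarging the window only admits more forbidden values.

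The gap disappears once you sharpen your estimate of the right-hand side. You bound $\left|\sum_{i\notin R}c_i\beta_i-\sum_{i\notin L}c_i\alpha_i\right|\le 2C(b_{n-1}+m_0)$ by estimating the two sums separately; but since every element of $A_{n-1}$ is positive (here $b_1>m\ge m_0$ is used), splitting the coefficients by sign gives the bound $C\cdot\max A_{n-1}<C(b_{n-1}+m_0)$. Combined with $|s_L-s_R|\ge 1$, every forbidden value $v$ then satisfies $v<C(b_{n-1}+m_0)\le C(b_{n-1}+m)<b_n-m\le b_n-m_0$, i.e.\ it lies entirely \emph{below} the window. This is exactly the paper's proof of Theorem \ref{thm3}: take any $m_0$ with $0<m_0\le m$ and any $a_n\in(b_n-m_0,b_n+m_0)$; a collision would force $a_n<C(b_{n-1}+m_0)$ while the growth hypothesis forces $a_n>b_n-m>C(b_{n-1}+m)\ge C(b_{n-1}+m_0)$, a contradiction. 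No counting, no case analysis on $|s_L-s_R|$, and no special treatment of small $n$ is needed, since $b_{k+1}>Cb_k+(C+1)m$ is assumed for all $k$.
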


\section{Proofs}
\begin{lemma}\label{lem21}\cite[Lemma 1]{Nathanson1}
Let $\varphi=\Sigma_{i=1}^h c_i x_i$ be a linear form with coefficients in the field $\mathbf{F}$. Let $V$ be a vector space over $\mathbf{F}$.
For every subset $A$ of $V$ and $b\in V\backslash A$,
$$
\varphi \left(A\cup \{b\}\right)= \bigcup_{J\subseteq \{1,\ldots,h \}} \Phi_J (A,b).
$$
If $A\cup \{b\}$ is a $\varphi$-Sidon set, then
\begin{eqnarray}\label{eq2.0}
\left\{   \Phi_J(A,b): J\subseteq \{1,\ldots,h \}  \right\}
\end{eqnarray}
is a set of pairwise disjoint sets.

If $A$ is a $\varphi$-Sidon set and (\ref{eq2.0}) is a set of pairwise disjoint sets, then $A\cup \{b\}$ is a $\varphi$-Sidon set.

\end{lemma}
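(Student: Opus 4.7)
My plan is to prove the three claims of the lemma in turn using a single bookkeeping device: to each $h$-tuple $(a_1,\ldots,a_h) \in (A \cup \{b\})^h$ I would attach the subset $J = \{i : a_i \in A\} \subseteq \{1,\ldots,h\}$, whose complement $J^c$ records precisely the indices where $a_i = b$. This dichotomy is unambiguous because $b \notin A$. The set equality $\varphi(A \cup \{b\}) = \bigcup_J \Phi_J(A,b)$ then drops out of the splitting
$$
\varphi(a_1,\ldots,a_h) \;=\; \sum_{i \in J} c_i a_i + \sum_{i \in J^c} c_i b \;=\; \varphi_J\bigl((a_j)_{j \in J}\bigr) + s(J^c)\,b,
$$
which exhibits any element of the left-hand side as a member of some $\Phi_J(A,b)$; the reverse inclusion is immediate from the definition of $\Phi_J$.

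For the forward direction, I would argue by contradiction. If $\Phi_{J_1}(A,b) \cap \Phi_{J_2}(A,b) \neq \emptyset$ for some $J_1 \neq J_2$, then the two representations of a shared element realize two $h$-tuples in $(A \cup \{b\})^h$ with identical $\varphi$-value. These tuples are genuinely distinct: any index $i$ in the symmetric difference of $J_1$ and $J_2$ carries $b$ in one tuple and an element of $A$ in the other, and $b \notin A$. This contradicts the $\varphi$-Sidon property of $A \cup \{b\}$. Collisions \emph{within} a single $\Phi_J(A,b)$ are also excluded, since $A$ being $\varphi$-Sidon forces $A$ to be $\varphi_J$-Sidon for every nonempty $J$ (as noted in the introduction); the degenerate case $J = \emptyset$ produces only the singleton $\{(\sum_i c_i)\,b\}$ and is trivially collision-free.

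For the converse, I would suppose $\varphi(a_1,\ldots,a_h) = \varphi(a_1',\ldots,a_h')$ for two tuples in $(A \cup \{b\})^h$, extract the associated index sets $J$ and $J'$, and conclude $J = J'$ from the pairwise disjointness hypothesis, since the common $\varphi$-value lies in both $\Phi_J(A,b)$ and $\Phi_{J'}(A,b)$. The coordinates in $J^c$ then already agree (both equal $b$), and subtracting the common $s(J^c)\,b$ term reduces the equation to a $\varphi_J$-equality between tuples drawn from $A$, which the $\varphi_J$-Sidon property of $A$ forces to be coordinatewise. The whole argument is essentially combinatorial bookkeeping; I do not anticipate a genuine obstacle beyond consistently invoking the inherited $\varphi_J$-Sidon property of $A$ and handling the degenerate case $J = \emptyset$ cleanly.
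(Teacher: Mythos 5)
Your argument is correct and is essentially the standard one: the paper itself imports this lemma from Nathanson's \emph{Sidon sets for linear forms} without reproving it, and the cited proof uses the same bookkeeping device of attaching to each tuple in $(A\cup\{b\})^h$ the index set $J$ of coordinates lying in $A$ (well-defined since $b\notin A$), together with the inherited $\varphi_J$-Sidon property of $A$. Nothing further is needed.
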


\begin{lemma}\label{lem22}
If $A$ is a $\varphi$-Sidon set, then any subset of $A$ is also a $\varphi$-Sidon set.
\end{lemma}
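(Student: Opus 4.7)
The plan is to unwind the definition of a $\varphi$-Sidon set directly; this lemma is essentially a triviality, so the ``proof'' is really just a definition check. Let $A$ be a $\varphi$-Sidon set and let $A' \subseteq A$ be any nonempty subset. I want to verify that $A'$ satisfies the defining property of a $\varphi$-Sidon set, namely that $\varphi(a_1,\ldots,a_h) = \varphi(a'_1,\ldots,a'_h)$ with $(a_1,\ldots,a_h), (a'_1,\ldots,a'_h) \in (A')^h$ forces $(a_1,\ldots,a_h) = (a'_1,\ldots,a'_h)$.

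The key observation is that $A' \subseteq A$ implies $(A')^h \subseteq A^h$. So any two $h$-tuples $(a_1,\ldots,a_h)$ and $(a'_1,\ldots,a'_h)$ drawn from $A'$ are also $h$-tuples drawn from $A$. Hence the hypothesis that $A$ is $\varphi$-Sidon applies to them directly, yielding $(a_1,\ldots,a_h) = (a'_1,\ldots,a'_h)$ as required. There is no obstacle whatsoever here; the statement is recorded as a lemma only because it will be invoked repeatedly in the later constructive arguments (e.g., when restricting a $\varphi$-Sidon set built up by induction to an initial segment $\{a_1,\ldots,a_k\}$).
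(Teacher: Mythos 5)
Your argument is correct and is exactly the definition-chasing the paper has in mind: the paper itself omits the proof, remarking only that it is easy and left to the reader, and the observation $(A')^h \subseteq A^h$ is the whole content. Nothing further is needed.
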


The Proof of Lemma \ref{lem22} is easy, we leave it to the reader.
%\begin{proof}
%Let $A_n$ is a subset of $A$, and so $A^h_n \subseteq A^h$. $A$ is a $\varphi$-Sidon set, it follows that for all $h$-tuples $(a_1,\ldots,a_h)\in A^h_n \subseteq A^n$ and $(a'_1,a'_2,\ldots,a'_h)\in A^h_n \subseteq A^n$, if
%$$
%\varphi (a_1,a_2,\ldots,a_h) =\varphi(a'_1,a'_2,\ldots,a'_h),
%$$
%then $(a_1,a_2,\ldots,a_h)= (a'_1,a'_2,\ldots,a'_h)$,  and by the definition of $\varphi$-Sidon set, we get that $A_n$ is also a $\varphi$-Sidon set.
%
%\end{proof}

\begin{proof}[Proof of Theorem \ref{thm1}]
We construct the $\varphi$-Sidon set $A=\{a_k:k=1,2,3,\ldots \}$ inductively. Let $a_1=b_1$, then $A_1=\{a_1\}$ is a $\varphi$-Sidon set and $|a_1-b_1|=0< 1^{4h}$.

Let $k\geq 1$ and let $A_k=\{a_1,a_2,\ldots,a_k\}$ be a set of $k$ distinct positive integers such that $A_k$ is a $\varphi$-Sidon set with $|a_i-b_i|< i^{4h}$ for all integers $i\leq k$. Let $b$ be a positive integer. By Lemma \ref{lem21}, the set $A_k\cup \{b\}$ is a $\varphi$-Sidon set if and only if the sets
$$\Phi_{J}(A_k,b)=\varphi_{J} (A_k)+\bigg(\sum_{j\in J^{c}} c_j\bigg)b$$
are pairwise disjoint for all $J\subseteq\{1,\ldots,h\}$. Let $J_1$ and $J_2$ be distinct subsets of $\{1,\ldots,h\}$. We have
$$ \Phi_{J_{1}}(A_k,b)\cap \Phi_{J_{2}}(A_k,b)\neq \emptyset$$
if and only if there exist integers $a_{1,j}\in A_k$ for all $j\in J_1$ and $a_{2,j}\in A_k$ for all $j\in J_2$ such that
$$ \sum_{j\in J_{1}}c_j a_{1,j}+\bigg(\sum_{j\in J_{1}^{c}} c_j\bigg)b= \sum_{j\in J_{2}}c_j a_{2,j}+\bigg(\sum_{j\in J_{2}^{c}} c_j\bigg)b.$$
Equivalently, the integer $b$ satisfies the equation
\begin{eqnarray}\label{eq2.2}
\bigg(\sum_{j\in J^{c}_{2}}c_j-\sum_{j\in J^{c}_{1}}c_j\bigg)b=\sum_{j\in J_{1}}c_j a_{1,j}-\sum_{j\in J_{2}}c_j a_{2,j}.
\end{eqnarray}
The integer
$$\begin{aligned}
 c=\sum_{j\in J^{c}_{2}}c_j-\sum_{j\in J^{c}_{1}}c_j = s(J_{2}^{c})-s(J_{1}^{c})
 =s(J_{1}\backslash (J_{1}\cap J_{2}) )-s(J_{2}\backslash (J_{1}\cap J_{2}) )
\end{aligned}$$
is nonzero because $J_{1}\backslash (J_{1}\cap J_{2})$ and $J_{2}\backslash (J_{1}\cap J_{2})$ are distinct and disjoint, and the linear form $\varphi$ satisfies condition $N$, and so there exists at most one integer $b $ that satisfies equation (\ref{eq2.2}).

Let $card(J_1)=k_1$ and $card(J_2)=k_2$. The sets $J_1$ and $J_2$ are distinct subsets of $\{1,\ldots,h\}$, and so at least one of the sets $J_1$ and $J_2$ is a proper subset of $\{1,\ldots,h\}$. It follows that
$$ k_1+k_2=card(J_1)+card(J_2)\leq 2h-1.$$
The number of integers of the form
$$\sum_{j\in J_{1}}c_j a_{1,j}-\sum_{j\in J_{2}}c_j a_{2,j} $$
with $a_{1,j}\in A_n$ and $a_{2,j}\in A_n$ is at most $n^{k_1+k_2}$. The number of ordered pairs $\left( J_1,J_2 \right)\subseteq \{1,\ldots,h\}$ with $|J_1|=k_1$ and $|J_2|=k_2$ is
$ \dbinom{h}{k_1}\dbinom{h}{k_2}.$
Thus, the number of equations of the form (\ref{eq2.2}) is less than
$$ \sum^{h}_{k_1=0}\sum^{h}_{k_2=0}\dbinom{h}{k_1}\dbinom{h}{k_2}n^{k_1+k_2}<\left(  \sum^{h}_{k_1=0}\dbinom{h}{k_1}  \right)^2 n^{2h-1}=4^h n^{2h-1},$$
and so there are less than $4^h n^{2h-1}+n$ positive integers $b$ such that $b \notin A_n$ and $A_n\cup \{b\}$ is a $\varphi$-Sidon set.

Let $a_{n+1}\in \left( b_{n+1}-4^h n^{2h-1}-n,b_{n+1}+4^h n^{2h-1}+n \right)$ satisfy that $A_{n+1}=A_{n}\cup \{a_{n+1}\}$ is a $\varphi$-Sidon set. Then
$$|a_{n+1}-b_{n+1}|<4^h n^{2h-1}+n.$$

Next, we prove that $4^h n^{2h-1}+n<(n+1)^{4h}$ for all positive integers $n$.

Case 1. $n=1$. Since $h$ is a positive integer and $2^{2h}\geq 4$, we have
$$2^{4h}=\left(2^{2h}\right)^2>2^{2h}+1=4^h+1.$$

Case 2. $n\geq 2$. It follows that
$$\begin{aligned}
(n+1)^{4h}&=(n+1)^{2h} \cdot (n+1)^{2h}
          > 2^{2h} \cdot n^{2h}= 4^h \cdot n^{2h-1} \cdot n \\
          &\geq 4^h \cdot n^{2h-1} \cdot 2
          > 4^h \cdot n^{2h-1}+ n.
\end{aligned}$$

This completes the proof of Theorem \ref{thm1}.
\end{proof}

\begin{proof}[Proof of Theorem \ref{thm2}]
Suppose that there exist a $\varphi$-Sidon set $A=\{a_k:k=1,2,\ldots\} $ and an integer $m_0$ such that
$ |a_k-b_k|<m_0$ for all positive integers $k$.
Let $A_{t-s}=\{a_s,a_{s+1},\ldots,a_t \}, t\geq s$. By Lemma \ref{lem22}, $A_{t-s}$ is also a $\varphi$-Sidon set, so we have
$$ \varphi\left(A_{t-s}\right)=\left\{ \sum_{i=1}^{h} c_i a_i :a_i\in A_{t-s} \right\} $$
and
\begin{eqnarray}\label{eq2.3}
|\varphi\left(A_{t-s}\right)|=(t-s+1)^h.
\end{eqnarray}
Let 
$$J_1=\{i:~1\le i\le h ~\text{and}~c_i>0\},\quad J_2=\{i:~1\le i\le h~\text{and}~c_i<0\}.$$
Then
$$
\varphi\left(A_{t-s}\right)_{\max}=\sum_{i\in J_1} c_i\cdot \max(A_{t-s}) +\sum_{i\in J_2} c_i \cdot \min(A_{t-s})
<\sum_{i\in J_1} c_i (b_t+m_0) +\sum_{i\in J_2} c_i (b_s-m_0),
$$
$$
\varphi\left(A_{t-s}\right)_{\min}=\sum_{i\in J_2} c_i \cdot \max(A_{t-s}) +\sum_{i\in J_1} c_i \cdot \min(A_{t-s})
>\sum_{i\in J_2} c_i (b_t+m_0) +\sum_{i\in J_1} c_i (b_s-m_0).
$$
Since $\varphi\left(A_{t-s}\right)$ is a set of integers, it follows that
\begin{eqnarray}\label{eq2.4}
\begin{aligned}
|\varphi\left(A_{t-s}\right)|&\le \sum_{i\in J_1} c_i (b_t+m_0) +\sum_{i\in J_2} c_i (b_s-m_0)- \bigg(\sum_{i\in J_2} c_i (b_t+m_0) +\sum_{i\in J_1} c_i (b_s-m_0)\bigg)-1  \\
          &=  \sum_{i=1}^{h} |c_i| (b_t+m_0)- \sum_{i=1}^{h} |c_i| (b_s-m_0) -1   \\
          &= C(b_t - b_s + 2 m_0)-1.
\end{aligned}
\end{eqnarray}
By (\ref{eq2.3}) and (\ref{eq2.4}), we have
$$
(t-s+1)^h\le C(b_t - b_s + 2 m_0)-1 \leq C\left[(t-s+1)^{h-\epsilon} + 2 m_0\right]-1.
$$
This inequality can not hold when $t-s+1$ large enough, a contradiction.

This completes the proof of Theorem \ref{thm2}.
\end{proof}

\begin{proof}[Proof of Theorem \ref{thm3}]
Take $m_0$ with $0<m_0 \leq m$ and $a_1=b_1$, then $A_1=\{a_1\}$ is a $\varphi$-Sidon set and  $|a_1-b_1|=0< m_0$.

Now we prove by induction on $k$. Suppose that $k\geq 1$ and $A_k=\{a_1,a_2,\ldots,a_k\}$ is a $\varphi$-Sidon set of integers such that $|a_i-b_i|<m_0$ for $i=1,2,\ldots,k$. 
Take a positive integer $a_{k+1}$ such that $$a_{k+1}\in \left(b_{k+1}-m_0,b_{k+1}+m_0 \right).$$
By Lemma \ref{lem21}, the set $A_k\cup \{a_{k+1}\}$ is a $\varphi$-Sidon set if and only if the sets
$$\Phi_{J}(A_k,a_{k+1})=\varphi_{J} (A_k)+\bigg(\sum_{j\in J^{c}} c_j\bigg)a_{k+1}$$
are pairwise disjoint for all $J\subseteq\{1,\ldots,h\}$.
Suppose that there exist two distinct subsets $J_1,J_2\subseteq \{1,\ldots,h\}$ such that
$$ \Phi_{J_{1}}(A_k,a_{k+1})\cap \Phi_{J_{2}}(A_k,a_{k+1})\neq \emptyset.$$
It follows that there exist integers $a_{1,j}\in A_k$ for all $j\in J_1$ and $a_{2,j}\in A_k$ for all $j\in J_2$ such that
$$ \sum_{j\in J_{1}}c_j a_{1,j}+\bigg(\sum_{j\in J_{1}^{c}} c_j\bigg)a_{k+1}= \sum_{j\in J_{2}}c_j a_{2,j}+\bigg(\sum_{j\in J_{2}^{c}} c_j\bigg)a_{k+1}.$$
Equivalently, the integer $a_{k+1}$ satisfies the equation
$$
\bigg(\sum_{j\in J^{c}_{2}}c_j-\sum_{j\in J^{c}_{1}}c_j\bigg)a_{k+1}=\sum_{j\in J_{1}}c_j a_{1,j}-\sum_{j\in J_{2}}c_j a_{2,j}.
$$
Since $J_{1}^{c}\neq J_{2}^{c}$ and the linear form $\varphi$ satisfies condition $N$,
it follows that
$\sum_{j\in J^{c}_{2}}c_j-\sum_{j\in J^{c}_{1}}c_j $
is nonzero, and so
$$
a_{k+1}=\frac{\sum_{j\in J_{1}}c_j a_{1,j}-\sum_{j\in J_{2}}c_j a_{2,j}}{\sum_{j\in J^{c}_{2}}c_j-\sum_{j\in J^{c}_{1}}c_j}\le |\sum_{j\in J_{1}}c_j a_{1,j}-\sum_{j\in J_{2}}c_j a_{2,j}|
\le \sum_{i=1}^h |c_i|\cdot \max\{a_1,a_2,\ldots,a_k\}
<C (b_k+m_0).
$$
Thus we have
$$
a_{k+1}>b_{k+1}-m> C b_k+(C+1)m-m=C (b_k+m)\geq C (b_k+m_0)> a_{k+1},
$$
which is a contradiction. Hence $ \Phi_{J_{1}}(A_k,a_{k+1})\cap \Phi_{J_{2}}(A_k,a_{k+1})=\emptyset.$

Therefore, the set $A_k\cup \{a_{k+1}\}$ is a $\varphi$-Sidon set and $|a_i-b_i|<m_0$ for all $i\leq k+1$.
\end{proof}

%\begin{theorem}\label{thm1} Let $k\ge 2$ be an integer and $A=\{a_0,a_1,\ldots,a_k\}$ be
%a set of integers satisfying
%$0=a_0<a_1<\cdots<a_k$ and $(a_1,a_2,\ldots,a_k)=1$. Then
%$$(hA)^{(t)}=C_t\cup [c,ha_k-d]\cup (ha_k-D_t)$$
%for all integers $h\ge h_t$, where
%$c$, $d$ are positive integers, $h_t=\sum_{i=2}^k (ta_i-1)-1$, and
%$C_t\subseteq [0,c-2]$ and $D_t\subseteq [0,d-2]$.
%\end{theorem}
%
%\begin{proof}[Proof]
%\end{proof}

%\section{Acknowledgement}

\end{document}